\crefname{equation}{}{}
\newtheorem{theorem}{Theorem}[section]
\newtheorem{lemma}[theorem]{Lemma}
\newtheorem{corollary}[theorem]{Corollary}
\newtheorem*{question*}{Question}
\theoremstyle{definition}
\newtheorem{definition}[theorem]{Definition}
\newtheorem{question}[theorem]{Question}
\newtheorem*{definition*}{Definition}
\theoremstyle{remark}
\newtheorem{remark}[theorem]{Remark}
\newcommand{\floor}[1]{\left\lfloor #1 \right\rfloor}
\title{Structure of tight $(k,0)$-stable graphs}
\author[Dingding Dong]{Dingding Dong$^1$}
\thanks{$^1$Department of Mathematics, Harvard University, Cambridge, MA 02138, USA.\\ \textcolor{white}{$^1$}Email: {\tt ddong@math.harvard.edu}.}
\author[Sammy Luo]{Sammy Luo$^2$}
\thanks{$^2$Department of Mathematics, MIT, Cambridge, MA 02139, USA. 
Research supported by NSF Award No. 2303290. Email: {\tt sammyluo@mit.edu}.}
\begin{document}

\maketitle

\begin{abstract}
    We say that a graph $G$ is $(k,\ell)$-stable if removing $k$ vertices from it reduces its independence number by at most $\ell$. We say that $G$ is tight $(k,\ell)$-stable if it is $(k,\ell)$-stable and its independence number equals $\floor{\frac{n-k+1}{2}}+\ell$, the maximum possible, where $n$ is the vertex number of $G$. Answering a question of Dong and Wu, we show that every tight $(2,0)$-stable graph with odd vertex number must be an odd cycle. Moreover, we show that for all $k\geq3$, every tight $(k,0)$-stable graph has at most $k+6$ vertices. 
\end{abstract}

\section{Introduction}

The resilience of graph properties  is a fundamental question in graph theory. 
Motivated by studies on the Erd\H{o}s--Rogers function, Dong and Wu \cite{DW22} investigated the resilience of graph independence number with respect to removing vertices. For a graph $G=(V,E)$ and vertices $v_1,\dots,v_k\in V$, we let $G\setminus\{v_1,\dots,v_k\}$ denote the induced subgraph of $G$ on $V\setminus\{v_1,\dots,v_k\}$. For  integers $k>\ell\ge 0$, we say that $G$ is \emph{$(k,\ell)$-stable} if for any $k$ vertices $v_1,\dots,v_k\in V$, we have $\alpha(G\setminus \{v_1,\dots,v_k\})\ge \alpha(G)-\ell$.

A result in \cite{DW22} states that any $(k,\ell)$-stable graph $G$ on $n$ vertices satisfies
\[
\alpha(G)\le \lfloor \frac{n-k+1}{2}\rfloor+\ell.
\]
A $(k,\ell)$-stable graph for which equality holds (i.e., $\alpha(G)= \lfloor \frac{n-k+1}{2}\rfloor+\ell$) is called \emph{tight $(k,\ell)$-stable}. When $\ell=0$, it is easily seen that every $K_{k+1}$ is tight $(k,0)$-stable. When $\ell>0$, one can always obtain a tight $(k,\ell)$-stable graph by taking the disjoint union of a tight $(k-1,\ell-1)$-stable graph with an isolated vertex. Starting with $K_{k-\ell+1}$, this yields a tight $(k,\ell)$-stable graph on $k+1$ vertices for any $k >\ell \ge 0$.

Suppose $n$ is any integer greater than $k$. Does an $n$-vertex $(k,\ell)$-stable graph always exist? The answer is ``yes'' when $(k,\ell)=(1,0)$. Indeed, when $n$ is even, any $n$-vertex balanced bipartite graph that contains a perfect matching is tight (1,0)-stable. When $n$ is odd, one can always take an $(n-1)$-vertex tight $(1,0)$-stable graph and add an extra vertex adjacent to all its vertices, to form an $n$-vertex tight (1,0)-stable graph. In general, by taking the disjoint union of a tight (1,0)-stable graph with $\ell$ isolated vertices, we immediately obtain examples of tight $(\ell+1,\ell)$-stable graphs for arbitrary $\ell$ on any number of vertices $n>\ell+1$.

Going one step further, we consider the case where $(k,\ell)=(2,0)$. When $n$ is even, one can find multiple examples of $n$-vertex tight $(2,0)$-stable graphs, such as  a vertex-disjoint union of two odd cycles, or an even subdivision of $K_4$ \footnote{We say that $G'$ is an even subdivision of $G$ if we can obtain $G'$ from $G$ by iteratively perfroming the following two-step operation:
\begin{enumerate}
    \item Take edge $uv \in E$ and two new vertices $u',v'\notin V$.
    \item Replace $V$ by $V\cup\{u',v'\}$ and $E$ by $(E\setminus\{uv\})\cup\{uu',u'v',v'v\}$.
\end{enumerate}

}.
However, when $n$ is odd, the only example known so far is the $n$-cycle $C_n$. 

Dong and Wu asked whether $C_n$ is the only $n$-vertex tight $(2,0)$-stable graph when $n$ is odd. We answer this question in the affirmative.
Moreover, we will show the following structural properties on tight $(k,0)$-stable graphs for $k=1$ and $k=2$.

\begin{theorem}
\label{thm:main-1}
    Let $G$ be a tight $(k,0)$-stable graph on $n$ vertices.
    \begin{enumerate}
        \item[(a)] If $k=1$ and $n$ is even, then $G$ contains a perfect matching.
        \item[(b)] If $k=1$ and $n$ is odd, then $G$ has a spanning subgraph that is a vertex-disjoint union of an odd cycle and a (possibly empty) matching.
        \item[(c)] If $k=2$ and $n$ is odd, then $G$ is an odd cycle.
        \item[(d)] If $k=2$ and $n$ is even, then $G$ has a spanning subgraph that is either a vertex-disjoint union of two odd cycles, or an even subdivision of $K_4$.
    \end{enumerate}
\end{theorem}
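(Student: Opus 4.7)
The plan is hierarchical: prove (a) and (b) via the Gallai--Edmonds matching decomposition, derive (c) from (b) with a short chord-elimination argument, and treat (d) separately by a parallel but more intricate analysis. A common preliminary is that every tight $(k,0)$-stable graph has no isolated vertex: an isolated vertex lies in every maximum independent set, so removing it strictly drops $\alpha$.

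For (a), I would assume $G$ has no perfect matching and apply Gallai--Edmonds, writing $V=D\cup A\cup C$ with $s$ singleton components and $t$ nonsingleton factor-critical components of $G[D]$. Using the identity $\alpha(G)=\alpha(G[D])+\alpha(G[C])$ and the bound $\alpha(K)\leq (|K|-1)/2$ for each factor-critical $K$ of size at least $3$, one obtains $\alpha(G)\leq (n+s-|A|-t)/2$. For $\alpha(G)=n/2$ the inequality must tighten to equality in each factor-critical component, which forces every singleton of $G[D]$ into every maximum independent set of $G$. This contradicts $(1,0)$-stability unless $s=0$, and case analysis then forces $|A|=t=0$, so $V=C$ and $G$ has a perfect matching. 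For (b), the same framework applied with $\alpha(G)=(n-1)/2$ and $n$ odd forces deficiency exactly~$1$: there is a nonsingleton factor-critical component $K^{\ast}$ containing the single unmatched vertex of a near-perfect matching, and by the ear decomposition theorem $K^{\ast}$ contains an odd cycle through this vertex. Combined with the near-perfect matching on the complementary vertices, this gives a spanning subgraph of the desired form.

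Part (c) is then a short bootstrap. Since $(2,0)$-stability implies $(1,0)$-stability and $\lfloor (n-1)/2\rfloor=\lfloor n/2\rfloor$ when $n$ is odd, our $G$ is tight $(1,0)$-stable, and part (b) yields a spanning subgraph $H=C_{2k+1}\sqcup M_t$. If $t\geq 1$, removing both endpoints of a matching edge drops $\alpha(H)$ (and hence $\alpha(G)$) to $(n-3)/2$, contradicting $(2,0)$-stability; so $t=0$ and $H=C_n$. To rule out chords, fix any chord $v_iv_j$ of this cycle with shorter arc length $d$, $2\leq d\leq (n-1)/2$, and remove the pair $\{v_{i-1},v_{j+1}\}$ when $d$ is even or $\{v_{i+1},v_{j-1}\}$ when $d$ is odd. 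In either case, a direct parity calculation shows the spanning structure on the remaining vertices is an odd cycle (of length $d+1$ or $n-d+1$) disjoint from a path, with total independence number $(n-3)/2$, again contradicting $(2,0)$-stability. Hence $G=C_n$.

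Part (d) is the most involved and I expect it to be the main obstacle. With $\alpha(G)=n/2-1$ and $n$ even, Gallai--Edmonds splits into two cases. If $G$ has no perfect matching, the deficiency is forced to equal~$2$, producing two nonsingleton factor-critical components whose internal odd cycles extend (via the near-perfect matching) to a spanning disjoint union of two odd cycles. If $G$ has a perfect matching, then K\"onig's identity $\alpha+\nu=n$ fails by~$1$, so $G$ is non-bipartite, and the task is to extract four ``hub'' vertices whose pairwise connecting paths form the spanning even subdivision of $K_4$. The technical difficulty is locating these hubs and verifying that all six connecting paths have the correct odd parity; I would approach this by applying $(2,0)$-stability to various candidate vertex pairs along hypothetical subdivision structures, using the resulting independence-number constraints to funnel $G$ into the claimed topology.
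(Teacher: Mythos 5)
Your plan for (c) (bootstrap from (b), rule out matching edges, then kill chords by deleting a well-chosen vertex pair) is sound and close in spirit to the paper's, but the foundation and the final part both have genuine gaps. The identity $\alpha(G)=\alpha(G[D])+\alpha(G[C])$ that drives your Gallai--Edmonds argument for (a) and (b) is not a valid general fact: since there are no $D$--$C$ edges one only gets $\alpha(G[D])+\alpha(G[C])\le\alpha(G)\le\alpha(G[D])+\alpha(G[C])+|A|$, and the $|A|$ term cannot be dropped. Concretely, let $G$ be two disjoint $5$-cycles plus one extra vertex $a$ adjacent to a single vertex of each cycle; then $D$ is the union of the two cycles, $A=\{a\}$, $C=\emptyset$, so $\alpha(G[D])+\alpha(G[C])=4$, while $\alpha(G)=5$ (take $a$ together with two pairwise nonadjacent nonneighbors of the attachment vertex in each cycle). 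Your derived bound $\alpha(G)\le(n+s-|A|-t)/2$ fails on this example ($4<5$), and with the correct $+|A|$ term the inequality no longer forces $s=|A|=t=0$, so the case elimination for (a) and (b) collapses unless you separately prove the identity for tight $(1,0)$-stable graphs, which you do not. The paper instead uses a much more elementary tool: $(1,0)$-stability implies Hall's condition from a maximum independent set $A$ into $V\setminus A$, which gives (a) immediately and gives (b) after an alternating-path analysis from the single unmatched vertex.

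The second gap is part (d), which you yourself leave as a plan rather than a proof: the perfect-matching case (``extract four hub vertices \dots I would approach this by \dots'') is exactly where the real content lies, and nothing is actually argued there. The paper's route is different and relies on an external structural theorem: pass to a spanning $\alpha$-critical subgraph $G'$ with $\alpha(G')=\alpha(G)$ (which inherits $(2,0)$-stability), show $G'$ has at most two components by summing the stability bound over components, and in the connected case invoke Andr\'asfai's theorem that a connected $\alpha$-critical graph with $|V|=2\alpha+2$ is an even subdivision of $K_4$. Some such input (or a reproof of it) is needed, and your sketch neither identifies it nor substitutes for it. Your disconnected case also needs more care than ``internal odd cycles extend via the near-perfect matching'': the clean way is to show each component is itself a tight $(2,0)$-stable graph of odd order and apply (c), and you must first justify that the deficiency is exactly $2$, which does not follow from $\alpha=n/2-1$ alone.
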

\begin{remark}
    The authors of \cite{DW22} conjectured  that every tight (2,0)-stable graph is Hamiltonian. Here we point out that this is not the case, as every even subdivision of $K_4$ is tight (2,0)-stable, but not every such subdivision is Hamiltonian.
\end{remark}

Note that for every $k=1,2$ and $n>k$, one can always find a tight $(k,0)$-stable graph with $n$ vertices. We show that this becomes different for $k\geq 3$. In particular, a tight $(3,0)$-stable graph has at most 9 vertices.

\begin{theorem}\label{thm:main-2}
    Let  $G$ be a tight $(3,0)$-stable graph. Then $G$ has a spanning subgraph that is among the following five graphs:
\begin{figure}[htbp]
    \centering
        \scalebox{0.9}{
    \begin{tikzpicture}
    \node[circle, fill=black, draw, inner sep=0pt,minimum size=2pt] at (0,0) {};
    \node[circle, fill=black, draw, inner sep=0pt,minimum size=2pt] at (1,0) {};
    \node[circle, fill=black, draw, inner sep=0pt,minimum size=2pt] at (-0.4,1) {};
    \node[circle, fill=black, draw, inner sep=0pt,minimum size=2pt] at (1.4,1) {};
    \node[circle, fill=black, draw, inner sep=0pt,minimum size=2pt] at (0.5,1.7) {};
    \draw[] (0,0) to (1,0);
    \draw[] (0,0) to (-0.4,1);
    \draw[] (1,0) to (1.4,1);
    \draw[] (-0.4,1) to (0.5,1.7);
    \draw[] (1.4,1) to (0.5,1.7);
    \draw[] (0,0) to (1.4,1);
    \draw[] (0,0) to (0.5,1.7);
    \draw[] (1,0) to (0.5,1.7);
    \draw[] (1,0) to (-0.4,1);
    \draw[] (1.4,1) to (-0.4,1);
    \node[circle, fill=black, draw, inner sep=0pt,minimum size=2pt] at (-2.3,0.1) {};
    \node[circle, fill=black, draw, inner sep=0pt,minimum size=2pt] at (-3.7,0.1) {};
    \node[circle, fill=black, draw, inner sep=0pt,minimum size=2pt] at (-2.3,1.5) {};
    \node[circle, fill=black, draw, inner sep=0pt,minimum size=2pt] at (-3.7,1.5) {};
    \draw[] (-2.3,0.1) to (-3.7,0.1);
    \draw[] (-2.3,0.1) to (-2.3,1.5);
    \draw[] (-2.3,0.1) to (-3.7,1.5);
    \draw[] (-2.3,1.5) to (-3.7,0.1);
    \draw[] (-3.7,1.5) to (-3.7,0.1);
    \draw[] (-3.7,1.5) to (-2.3,1.5);
    \node[circle, fill=black, draw, inner sep=0pt,minimum size=2pt] at (3.5,0) {};
    \node[circle, fill=black, draw, inner sep=0pt,minimum size=2pt] at (4.5,0) {};
    \node[circle, fill=black, draw, inner sep=0pt,minimum size=2pt] at (3.1,1) {};
    \node[circle, fill=black, draw, inner sep=0pt,minimum size=2pt] at (4.9,1) {};
    \node[circle, fill=black, draw, inner sep=0pt,minimum size=2pt] at (3.7,1) {};
    \node[circle, fill=black, draw, inner sep=0pt,minimum size=2pt] at (4.3,1) {};
    \node[circle, fill=black, draw, inner sep=0pt,minimum size=2pt] at (4,1.7) {};
    \draw[] (3.5,0) to (4.5,0);
    \draw[] (4.5,0) to (4.9,1);
    \draw[] (4.9,1) to (4,1.7);
    \draw[] (4,1.7) to (3.1,1);
    \draw[] (3.1,1) to (3.5,0);
    \draw[] (3.1,1) to (3.7,1);
    \draw[] (4.3,1) to (4.9,1);
    \draw[] (4,1.7) to (3.7,1);
    \draw[] (4,1.7) to (4.3,1);
    \draw[] (3.7,1) to (3.5,0);
    \draw[] (4.3,1) to (4.5,0);
    \node [label={$H_7$}] at (4,-1) {};
    \node [label={$K_4$}] at (-3,-1) {};
    \node [label={$K_5$}] at (0.5,-1) {};
\end{tikzpicture}}

\scalebox{0.9}{
\begin{tikzpicture}
    \node[circle, fill=black, draw, inner sep=0pt,minimum size=2pt] at (-0.5,-0.2) {};
    \node[circle, fill=black, draw, inner sep=0pt,minimum size=2pt] at (2.5,-0.2) {};
    \node[circle, fill=black, draw, inner sep=0pt,minimum size=2pt] at (1,1.73*1.5-0.2) {};
    \node[circle, fill=black, draw, inner sep=0pt,minimum size=2pt] at (0.8,0.4) {};
    \node[circle, fill=black, draw, inner sep=0pt,minimum size=2pt] at (1.2,0.4) {};
    \node[circle, fill=black, draw, inner sep=0pt,minimum size=2pt] at (0.6,0.2*1.73+0.4) {};
    \node[circle, fill=black, draw, inner sep=0pt,minimum size=2pt] at (1.4,0.2*1.73+0.4) {};
    \node[circle, fill=black, draw, inner sep=0pt,minimum size=2pt] at (0.8,0.4*1.73+0.4) {};
    \node[circle, fill=black, draw, inner sep=0pt,minimum size=2pt] at (1.2,0.4*1.73+0.4) {};
    \draw[] (0.8,0.4) to (1.2,0.4);
    \draw[] (1.2,0.4) to (1.4,0.2*1.73+0.4);
    \draw[] (1.4,0.2*1.73+0.4) to (1.2,0.4*1.73+0.4);
    \draw[] (1.2,0.4*1.73+0.4) to (0.8,0.4*1.73+0.4);
    \draw[] (0.8,0.4*1.73+0.4) to (0.6,0.2*1.73+0.4);
    \draw[] (0.6,0.2*1.73+0.4) to (0.8,0.4);
    \draw[] (-0.5,-0.2) to (2.5,-0.2);
    \draw[] (-0.5,-0.2) to (1,1.73*1.5-0.2);
    \draw[] (1,1.73*1.5-0.2) to (2.5,-0.2);
    \draw   (0.8,0.4*1.73+0.4) to [in=60,out=-150] (-0.5,-0.2);
    \draw   (1.2,0.4) to [in=10,out=-150] (-0.5,-0.2);
    \draw   (0.8,0.4) to [in=170,out=-30] (2.5,-0.2);
    \draw   (1.2,0.4*1.73+0.4) to [in=120,out=-30] (2.5,-0.2);
    \draw   (1,1.73*1.5-0.2) to [in=80,out=-110] (0.6,0.2*1.73+0.4);
    \draw   (1,1.73*1.5-0.2) to [in=100,out=-70] (1.4,0.2*1.73+0.4);
    \node[circle, fill=black, draw, inner sep=0pt,minimum size=2pt] at (5,-0.2) {};
    \node[circle, fill=black, draw, inner sep=0pt,minimum size=2pt] at (5,0.4) {};
    \node[circle, fill=black, draw, inner sep=0pt,minimum size=2pt] at (5,1) {};
    \node[circle, fill=black, draw, inner sep=0pt,minimum size=2pt] at (7,-0.2) {};
    \node[circle, fill=black, draw, inner sep=0pt,minimum size=2pt] at (7,0.4) {};
    \node[circle, fill=black, draw, inner sep=0pt,minimum size=2pt] at (7,1) {};
    \node[circle, fill=black, draw, inner sep=0pt,minimum size=2pt] at (5,2.39) {};
    \node[circle, fill=black, draw, inner sep=0pt,minimum size=2pt] at (7,2.39) {};
    \node[circle, fill=black, draw, inner sep=0pt,minimum size=2pt] at (6,1.695) {};
    \draw[] (7,-0.2) to (5,0.4);
    \draw[] (7,1) to (7,0.4);
    \draw[] (5,-0.2) to (7,0.4);
    \draw[] (5,1) to (5,0.4);
    \draw[] (7,-0.2) to (5,-0.2);
    \draw[] (7,0.4) to (5,0.4);
    \draw[] (7,1) to (6,1.695);
    \draw[] (5,1) to (6,1.695);
    \draw[] (7,2.39) to (6,1.695);
    \draw[] (5,2.39) to (6,1.695);
    \draw[] (7,2.39) to (7,1);
    \draw[] (5,2.39) to (5,1);
    \draw[] (7,2.39) to [in=50,out=-50] (7,-0.2);
    \draw[] (5,2.39) to [in=130,out=-130] (5,-0.2);
    \node [label={$T_9$}] at (1,-1.2) {};
    \node [label={$H_9$}] at (6,-1.2) {};
\end{tikzpicture}}
\end{figure}
\end{theorem}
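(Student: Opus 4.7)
The plan is to reduce the analysis to \Cref{thm:main-1} via vertex-deletion arguments. The key observation is that tight $(3,0)$-stability descends under vertex removal: for any $v \in V(G)$, the graph $G\setminus\{v\}$ is tight $(2,0)$-stable on $n-1$ vertices, and for any pair $u,v$, the graph $G\setminus\{u,v\}$ is tight $(1,0)$-stable on $n-2$ vertices. Indeed, $\alpha(G\setminus S)\le\alpha(G)$ is trivial, while $\alpha(G\setminus S)\ge\alpha(G)$ follows from $(3,0)$-stability for $|S|\le 3$, giving equality and matching the tightness formula for $n-|S|$ vertices; the residual $(2,0)$- and $(1,0)$-stability then follows because further vertex removals from $G\setminus S$ remain within the $(3,0)$-stability guarantee of $G$.

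When $n$ is even, I would apply \Cref{thm:main-1}(c) to $G\setminus\{v\}$, which has $n-1$ odd vertices, forcing $G\setminus\{v\}=C_{n-1}$ exactly for every $v$. Then $G$ must be $d$-regular with $|E(G)|=(n-1)+d$ independent of $v$, and double counting yields $nd=2(n-1+d)$, i.e.\ $d=2+2/(n-2)$; integrality together with $n\ge 4$ forces $n=4$, whence $G=K_4$.

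When $n$ is odd and $n=5$, $\alpha(G)=1$ forces $G=K_5$. For odd $n\ge 7$, \Cref{thm:main-1}(d) gives that $G\setminus\{v\}$ has a spanning subgraph of one of two types: (A) a vertex-disjoint union of two odd cycles $C_a\cup C_b$ with $a+b=n-1$, or (B) an even subdivision of $K_4$. A preliminary lemma I would establish is $\delta(G)\ge 3$: if some $v$ had $\deg(v)\le 2$, then $N[v]$ has at most three elements, so by $(3,0)$-stability $\alpha(G\setminus N[v])\ge\alpha(G)$, and then $\{v\}\cup I$ for any maximum independent set $I$ of $G\setminus N[v]$ would be an independent set in $G$ of size $\alpha(G)+1$, contradicting tightness.

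The main obstacle is the odd case $n\ge 7$, especially ruling out $n\ge 11$ and pinning down the spanning subgraph for $n\in\{7,9\}$. My plan is to fix $v$, analyze the spanning subgraph of $G\setminus\{v\}$ under types (A) and (B), and then leverage the tight $(1,0)$-stable structure of $G\setminus\{u,v\}$ from \Cref{thm:main-1}(b) (a spanning odd cycle plus matching on $n-2$ vertices) to severely constrain the neighborhood of $v$ in $G$. In each subcase, compatibility of $v$'s neighborhood with the required spanning structure for \emph{every} $G\setminus\{v'\}$ should force $n\in\{7,9\}$ and identify the spanning subgraph of $G$ as one of $H_7$, $T_9$, or $H_9$. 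The $K_4$-subdivision case (B) is expected to be the most delicate because of the variety of subdivision topologies, and the technical heart of the proof is showing that for $n\ge 11$ no joint assignment of neighborhoods of the deleted vertices can be simultaneously consistent with the required structures.
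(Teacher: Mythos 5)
Your reduction facts are correct (tight $(3,0)$-stability descends to tight $(2,0)$- and $(1,0)$-stability under deletion of one or two vertices), your even case is complete and matches the paper's (your regularity/double-counting finish is a nice explicit way to get $K_4$ from ``$G\setminus\{v\}=C_{n-1}$ for all $v$,'' which the paper just asserts), the $n=5$ case is fine, and your minimum-degree-$3$ lemma is essentially the paper's. But the odd case with $n\ge 7$ --- which is the entire substance of the theorem --- is not proved; it is only a plan. You acknowledge that ruling out $n\ge 11$ and pinning down $H_7$, $T_9$, $H_9$ for $n\in\{7,9\}$ is ``the technical heart,'' and you do not supply it. The proposed route (enumerating how $v$ can attach to a spanning union of two odd cycles or to a spanning even subdivision of $K_4$ in $G\setminus\{v\}$, simultaneously for all $v$) is not a routine verification: carried out in full it would amount to re-deriving a classification theorem, and there is no evidence in the proposal that the case analysis closes.

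The paper avoids this entirely by a different mechanism: it passes to an $\alpha$-critical spanning subgraph $G'$ of $G$ (greedily deleting edges while preserving $\alpha$), shows via a component-counting inequality that $G'$ is connected with $|V(G')|=2\alpha(G')+3$, shows $\delta(G')\ge 3$ by the same neighborhood-deletion argument you use, and then invokes Sur\'anyi's classification (\cref{thm:deficiency-3}) of connected $\alpha$-critical graphs with $|V|=2\alpha+3$ and minimum degree at least $3$: these are exactly $K_5$, $H_7$, $H_9$, $T_9$. This external input is what makes the odd case finite and tractable, and it is the missing idea in your proposal. (A smaller point: if you did adopt the $\alpha$-critical route, note that your degree lemma as stated gives $\delta(G)\ge 3$, whereas what is needed is $\delta(G')\ge 3$ for the edge-deleted subgraph $G'$; the same argument works because $G'$ is still $(3,0)$-stable with the same independence number, but it must be applied to $G'$, not $G$.)
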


Observe that if $G$ is  tight $(k,0)$-stable, then  for any vertex $v\in V$, $G\setminus \{v\}$ is tight $(k-1,0)$-stable. Thus, \cref{thm:main-2} immediately gives  the following.
\begin{corollary}
    \label{cor:main}
    For all $k\geq 3$, any tight $(k,0)$-stable graph has at most $k+6$ vertices.
\end{corollary}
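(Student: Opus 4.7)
The plan is to combine the preconditioning observation (that removing a vertex from a tight $(k,0)$-stable graph yields a tight $(k-1,0)$-stable graph) with \cref{thm:main-2} via a one-step induction on $k$.

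First I would verify the observation carefully, since the deduction depends on it. Let $G$ be tight $(k,0)$-stable on $n$ vertices and let $v\in V(G)$. For any $k-1$ other vertices $v_1,\dots,v_{k-1}$, applying the $(k,0)$-stability of $G$ to the set $\{v,v_1,\dots,v_{k-1}\}$ gives
\[
\alpha\paren{(G\setminus\{v\})\setminus\{v_1,\dots,v_{k-1}\}}=\alpha(G\setminus\{v,v_1,\dots,v_{k-1}\})\ge \alpha(G).
\]
In particular (take $k-1$ arbitrary other vertices and then recall $\alpha(G\setminus\{v\})\le\alpha(G)$ trivially), $\alpha(G\setminus\{v\})=\alpha(G)=\floor{\frac{n-k+1}{2}}=\floor{\frac{(n-1)-(k-1)+1}{2}}$. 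Thus $G\setminus\{v\}$ is $(k-1,0)$-stable and achieves the extremal independence number, hence is tight $(k-1,0)$-stable.

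Next I would treat the base case $k=3$ by simply reading off \cref{thm:main-2}: every tight $(3,0)$-stable graph has a spanning subgraph among $K_4,K_5,H_7,T_9,H_9$, the largest of which has $9=3+6$ vertices, so $|V(G)|\le 9$. For the inductive step $k\ge 4$, I would assume the bound holds for $k-1$; then given any tight $(k,0)$-stable $G$ on $n$ vertices and any $v\in V(G)$, the observation produces a tight $(k-1,0)$-stable graph $G\setminus\{v\}$ on $n-1$ vertices, and the inductive hypothesis forces $n-1\le(k-1)+6$, i.e., $n\le k+6$.

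There is no real obstacle here: all the substantive work is already encapsulated in \cref{thm:main-2}. The only care needed is in the verification of the observation, specifically ensuring that both the stability condition and the tightness (equality in the extremal bound) transfer to $G\setminus\{v\}$, which is immediate from the computation above together with the fact that the extremal value $\floor{\frac{n-k+1}{2}}$ is invariant under the simultaneous shift $n\mapsto n-1$, $k\mapsto k-1$.
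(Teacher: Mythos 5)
Your proposal is correct and follows exactly the paper's intended argument: the paper states the same observation (that deleting any vertex from a tight $(k,0)$-stable graph leaves a tight $(k-1,0)$-stable graph) immediately before the corollary and derives the bound from \cref{thm:main-2} by the same induction, with base case $9=3+6$ vertices. Your write-up merely fills in the routine verification of the observation, which is accurate.
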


We will prove \cref{thm:main-1}(a)--(c) in Section 2. Our proof of \cref{thm:main-1}(d) and \cref{thm:main-2} utilizes properties of \textit{$\alpha$-critical graphs}, which are graphs whose independence number is affected by any \textit{edge} removal. In Section 3, we introduce the notion of $\alpha$-criticality and prove these parts. Finally, in Section 4, we discuss some remaining questions about $(k,\ell)$-stable graphs.

\section{Proof of \cref{thm:main-1}(a)--(c)}

We start by stating the following fact on (1,0)-stable graphs that was proved in \cite{DW22}. For completeness, we also include a proof of this result.
\begin{lemma}\label{lem:hall}
    Let $G=(V,E)$ be a $(1,0)$-stable graph, and let $A\subseteq V$ be a maximum independent set in $G$. Then there exists a matching of size $|A|$ between $A$ and $V\setminus A$.
\end{lemma}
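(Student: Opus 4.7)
The plan is to verify Hall's condition on the bipartite graph $H$ between $A$ and $V\setminus A$ with edges inherited from $G$, so that a matching of size $|A|$ saturating $A$ exists by Hall's theorem. I will argue by contradiction, assuming there is some $Y\subseteq A$ with $|N_H(Y)|<|Y|$ and taking $Y$ of minimum cardinality.

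When $|Y|=1$, say $Y=\{y\}$, the vertex $y$ has no neighbor in $V\setminus A$ (by choice of $Y$) and no neighbor in $A$ (by independence), so $y$ is isolated in $G$. Then $(1,0)$-stability furnishes a maximum independent set $A'$ avoiding $y$, and $A'\cup\{y\}$ is independent of size $|A|+1$, contradicting maximality.

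For $|Y|\geq 2$ my intended tool is the following auxiliary swapping lemma: \emph{if $A, A''$ are both maximum independent sets of $G$, then $G$ contains a perfect matching between $A\setminus A''$ and $A''\setminus A$.} This is a short consequence of Hall applied to the bipartite graph on $(A\setminus A'',\, A''\setminus A)$ with edges from $G$: a failure of Hall there would, via a direct swap, produce an independent set larger than $|A|$, contradicting that $A$ and $A''$ are maximum. Combining this lemma with $(1,0)$-stability, for any $y\in Y$ we obtain a maximum independent set $A_y$ with $y\notin A_y$, and matching $A\setminus A_y$ with $A_y\setminus A\subseteq V\setminus A$ gives each element of $Y\cap (A\setminus A_y)$ a distinct neighbor in $N_H(Y)$.

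The main obstacle is to obtain all $|Y|$ elements of $Y$ mapped injectively into $N_H(Y)$. I plan to choose $A_y$ to minimize $|A_y\cap Y|$: if the minimum is $0$, then $Y\subseteq A\setminus A_y$ and the perfect matching above produces $|Y|$ distinct vertices of $N_H(Y)$, contradicting Hall's failure. Otherwise every maximum independent set meets $Y$; here I would leverage the minimality of $Y$ (which forces the tight equalities $N_H(Y\setminus\{y\})=N_H(Y)$ for each $y\in Y$, so $N_H(y)\subseteq N_H(Y\setminus\{y\})$) together with iterated applications of $(1,0)$-stability on multiple vertices of $Y$ to build up an independent set of size exceeding $|A|$. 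This final case, handling the ``transversal'' configuration where every maximum independent set intersects $Y$, is the delicate part of the argument.
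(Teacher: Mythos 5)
Your overall strategy (Hall's condition on the bipartite graph between $A$ and $V\setminus A$, contradiction via a minimal violating set $Y$, and a single application of $(1,0)$-stability to some $y\in Y$) is the same as the paper's, and your preliminary observations are correct: the $|Y|=1$ case, the swapping lemma for two maximum independent sets, and the consequence of minimality that $|N_H(Y)|=|Y|-1$ and $N_H(Y\setminus\{y\})=N_H(Y)$ all check out. However, the proof is not complete: the case you defer --- where every maximum independent set meets $Y$, so you cannot find $A_y$ with $Y\subseteq A\setminus A_y$ --- is a genuine gap, and it is the heart of the lemma rather than a routine leftover. ``Iterated applications of $(1,0)$-stability on multiple vertices of $Y$'' is not an argument; stability only guarantees, for each single vertex, \emph{some} maximum independent set avoiding it, and these sets need not be compatible in any way that lets you accumulate an independent set of size exceeding $|A|$.

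The fix, which is what the paper does, is to abandon the attempt to push all of $Y$ outside of $A'$ and instead make the swap work for an arbitrary $A'$ avoiding a single $z\in Y$. Split $Y$ into $X_1=Y\cap A'$ and $X_2=Y\setminus A'$ (so $z\in X_2$ and $X_1\subsetneq Y$), and consider $U=(A'\setminus N(X_2))\cup X_2$, which is independent. Since $A'$ is independent and $X_1\subseteq A'$, no vertex of $A'$ lies in $N(X_1)$, so $A'\cap N(X_2)=A'\cap\left(N(Y)\setminus N(X_1)\right)$, giving $|U|\ge |A'|-|N(Y)|+|N(X_1)|+|X_2|$. Minimality of $Y$ applied to the \emph{proper} subset $X_1$ yields $|N(X_1)|\ge |X_1|$, whence $|U|\ge |A'|-|N(Y)|+|Y|>|A'|$, a contradiction. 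Note that this single counting argument covers both of your cases at once and makes the swapping lemma unnecessary; your argument as written only succeeds in the case where some maximum independent set is disjoint from $Y$.
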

\begin{proof}It suffices to show that Hall's condition holds from $A$ to $V\setminus A$. For every subset $S\subseteq A$, we use $N(S)$ to denote the neighborhood of $S$ in $G$, which is a subset of $V\setminus A$ as $A$ is independent. By contradiction, suppose there exists $Z\subseteq A$ such that $Z\neq\emptyset$ and $|N(Z)|<|Z|$. Without loss of generality, we may assume that $Z$ is minimal. Take any $z\in Z$. Since $\alpha(G)=\alpha(G\setminus\{z\})$, there exists another maximum independent set $A'$ in $G$ that does not contain $z$.

Let $X_1=Z\cap A'$ and $X_2=Z\setminus A'$. Define $U=(A'\setminus N(X_2))\cup X_2$. Since $A'$ is independent, $X_2\subseteq Z\subseteq A$ is independent and $A'\setminus N(X_2)$ does not contain any neighbors of $X_2$, we know that $U=(A'\setminus N(X_2))\cup X_2$ is independent.

Note that $Z$ is a disjoint union of $X_1$ and $X_2$. Since $X_1\subseteq A'$ and $A'$ is independent, for every $a\in A'$, we cannot have $a\in N(X_1)$, so $a\notin N(X_2)$ if and only if $a\notin N(Z)\setminus N(X_1)$. Hence we have $U=(A'\setminus (N(Z)\setminus N(X_1)))\cup X_2$. Since $X_2$ and $A'$ are disjoint, this gives
\begin{align*}
    |U|&=|A'\setminus (N(Z)\setminus N(X_1))|+ |X_2|\\
    &\geq |A'|-| N(Z)\setminus N(X_1)|+ |X_2|\\
    &=|A'|-| N(Z)|+ |N(X_1)|+ |X_2|.
\end{align*}
Since $z\in Z\setminus A'$, we have $X_1\subsetneq Z$, so $|N(X_1)|\geq |X_1|$ by minimality of $Z$. Thus, we have
\begin{align*}
    |U|
    \geq |A'|-| N(Z)|+ |X_1|+ |X_2|
    =|A'|-| N(Z)|+|Z|>|A'|,
\end{align*}
contradicting the fact that $A'$ is a maximum independent set in $G$.
\end{proof}

With \cref{lem:hall}, we can go ahead and prove \cref{thm:main-1}(a)--(c).
\begin{proof}[Proof of \cref{thm:main-1}(a)]
Suppose $G=(V,E)$ is a tight $(1,0)$-stable graph on $n$ vertices, with $n$ even. Then $\alpha(G)=\floor{\frac{n}{2}}=\frac{n}{2}$. Let $A\subseteq V$ be a maximum independent set, so that $|A|=|V\setminus A|=\frac{n}{2}$. By \cref{lem:hall}, there exists a matching from $A$ to $V\setminus A$, which is a perfect matching in $G$.
\end{proof}

\begin{proof}[Proof of \cref{thm:main-1}(b)]
    Suppose $G=(V,E)$ is a tight $(1,0)$-stable graph on $n$ vertices, with $n$ odd. Then $\alpha(G)=\floor{\frac{n}{2}}=\frac{n-1}{2}$. Let $m=\alpha(G)=\frac{n-1}{2}$ and $A\subseteq V$ be a maximum independent set, so that $|A|=m$ and $|V\setminus A|=m+1$. By \cref{lem:hall}, there exists a matching of size $m$ from $A$ to $V\setminus A$. Label $V=\{a_1,\dots,a_m,b_1,\dots,b_m,c\}$ such that $A=\{a_1,\dots,a_m\}$, and $a_ib_i\in E$ for every $i\in[m]$. Let $B=\{b_1,\dots,b_m\}$. 
    
    For every $b_i\in B$, we say that $b_i$ has property $(*)$ if there exists $\{i_1,\dots,i_s\} \subseteq [m]$ such that $i_s=i$, and $c$--$a_{i_1}$--$b_{i_1}$--\dots--$a_{i_s}$--$b_{i_s}$ is a path in $G$. Consider the partition $[m]=X\cup Y$ given by
    \begin{align*}
        X&=\{i\in [m]:b_i\text{ has property $(*)$}\},\\
        Y&=\{i\in [m]:b_i\text{ does not have property $(*)$}\}.
    \end{align*}
    
    Observe that there is no edge between $\{a_i:i\in Y\}$ and $\{b_i:i\in X\}\cup\{c\}$. If for some $i_0\in Y$, $a_{i_0}$ is adjacent to some vertex in $\{b_i:i\in X\}\cup\{c\}$, then $b_{i_0}$ would have property $(*)$, contradicting the definition of $Y$. Since $\{a_i:i\in Y\}\subseteq A$ is independent, to avoid the independent set $\{a_i:i\in Y\}\cup \{b_i:i\in X\}\cup\{c\}$ (as it has size $m+1$), there must be some edge within $\{b_i:i\in X\}\cup\{c\}$.  One can then check that no matter where this edge occurs, it will create a spanning subgraph of $G$ that is a vertex-disjoint union of one odd cycle and a (possibly empty) matching (see 
    \cref{fig:1} for an illustration).
\end{proof}

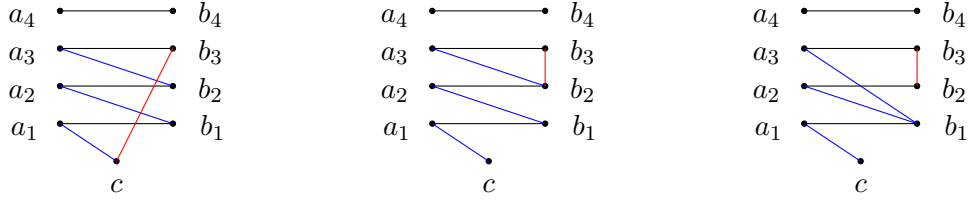
\begin{figure}
\begin{tikzpicture}
    \node[circle, fill=black, draw, inner sep=0pt,minimum size=2pt] at (0.75,-0.5) {};
    \node[circle, fill=black, draw, inner sep=0pt,minimum size=2pt] at (0,0) {};
    \node[circle, fill=black, draw, inner sep=0pt,minimum size=2pt] at (1.5,0) {};
    \node[circle, fill=black, draw, inner sep=0pt,minimum size=2pt] at (0,0.5) {};
    \node[circle, fill=black, draw, inner sep=0pt,minimum size=2pt] at (1.5,0.5) {};
    \node[circle, fill=black, draw, inner sep=0pt,minimum size=2pt] at (0,1) {};
    \node[circle, fill=black, draw, inner sep=0pt,minimum size=2pt] at (1.5,1) {};
    \node[circle, fill=black, draw, inner sep=0pt,minimum size=2pt] at (0,1.5) {};
    \node[circle, fill=black, draw, inner sep=0pt,minimum size=2pt] at (1.5,1.5) {};
    \draw[] (0,0) to (1.5,0);
    \draw[] (0,0.5) to (1.5,0.5);
    \draw[] (0,1) to (1.5,1);
    \draw[] (0,1.5) to (1.5,1.5);
    \draw[color=blue] (0.75,-0.5) to (0,0);
    \draw[color=blue] (0,0.5) to (1.5,0);
    \draw[color=blue] (0,1) to (1.5,0.5);
    \draw[color=red] (1.5,1) to  (0.75,-0.5);
    \node [label={\small $c$}] at (0.75,-1.2) {};
    \node [label={\small $a_1$}] at (-0.47,-0.5) {};
    \node [label={\small $a_2$}] at (-0.5,0) {};
    \node [label={\small $a_3$}] at (-0.5,0.5) {};
    \node [label={\small $a_4$}] at (-0.5,1) {};
    \node [label={\small $b_1$}] at (2.03,-0.5) {};
    \node [label={\small $b_2$}] at (2,0) {};
    \node [label={\small $b_3$}] at (2,0.5) {};
    \node [label={\small $b_4$}] at (2,1) {};   
\end{tikzpicture}
\qquad \qquad 
\begin{tikzpicture}
    \node[circle, fill=black, draw, inner sep=0pt,minimum size=2pt] at (0.75,-0.5) {};
    \node[circle, fill=black, draw, inner sep=0pt,minimum size=2pt] at (0,0) {};
    \node[circle, fill=black, draw, inner sep=0pt,minimum size=2pt] at (1.5,0) {};
    \node[circle, fill=black, draw, inner sep=0pt,minimum size=2pt] at (0,0.5) {};
    \node[circle, fill=black, draw, inner sep=0pt,minimum size=2pt] at (1.5,0.5) {};
    \node[circle, fill=black, draw, inner sep=0pt,minimum size=2pt] at (0,1) {};
    \node[circle, fill=black, draw, inner sep=0pt,minimum size=2pt] at (1.5,1) {};
    \node[circle, fill=black, draw, inner sep=0pt,minimum size=2pt] at (0,1.5) {};
    \node[circle, fill=black, draw, inner sep=0pt,minimum size=2pt] at (1.5,1.5) {};
    \draw[] (0,0) to (1.5,0);
    \draw[] (0,0.5) to (1.5,0.5);
    \draw[] (0,1) to (1.5,1);
    \draw[] (0,1.5) to (1.5,1.5);
    \draw[color=blue] (0.75,-0.5) to (0,0);
    \draw[color=blue] (0,0.5) to (1.5,0);
    \draw[color=blue] (0,1) to (1.5,0.5);
    \draw[color=red] (1.5,1) to (1.5,0.5);
    \node [label={\small $c$}] at (0.75,-1.2) {};
    \node [label={\small $a_1$}] at (-0.47,-0.5) {};
    \node [label={\small $a_2$}] at (-0.5,0) {};
    \node [label={\small $a_3$}] at (-0.5,0.5) {};
    \node [label={\small $a_4$}] at (-0.5,1) {};
    \node [label={\small $b_1$}] at (2.03,-0.5) {};
    \node [label={\small $b_2$}] at (2,0) {};
    \node [label={\small $b_3$}] at (2,0.5) {};
    \node [label={\small $b_4$}] at (2,1) {};   
\end{tikzpicture}
\qquad \qquad 
\begin{tikzpicture}
    \node[circle, fill=black, draw, inner sep=0pt,minimum size=2pt] at (0.75,-0.5) {};
    \node[circle, fill=black, draw, inner sep=0pt,minimum size=2pt] at (0,0) {};
    \node[circle, fill=black, draw, inner sep=0pt,minimum size=2pt] at (1.5,0) {};
    \node[circle, fill=black, draw, inner sep=0pt,minimum size=2pt] at (0,0.5) {};
    \node[circle, fill=black, draw, inner sep=0pt,minimum size=2pt] at (1.5,0.5) {};
    \node[circle, fill=black, draw, inner sep=0pt,minimum size=2pt] at (0,1) {};
    \node[circle, fill=black, draw, inner sep=0pt,minimum size=2pt] at (1.5,1) {};
    \node[circle, fill=black, draw, inner sep=0pt,minimum size=2pt] at (0,1.5) {};
    \node[circle, fill=black, draw, inner sep=0pt,minimum size=2pt] at (1.5,1.5) {};
    \draw[] (0,0) to (1.5,0);
    \draw[] (0,0.5) to (1.5,0.5);
    \draw[] (0,1) to (1.5,1);
    \draw[] (0,1.5) to (1.5,1.5);
    \draw[color=blue] (0.75,-0.5) to (0,0);
    \draw[color=blue] (0,0.5) to (1.5,0);
    \draw[color=blue] (0,1) to (1.5,0);
    \draw[color=red] (1.5,1) to (1.5,0.5);
    \node [label={\small $c$}] at (0.75,-1.2) {};
    \node [label={\small $a_1$}] at (-0.47,-0.5) {};
    \node [label={\small $a_2$}] at (-0.5,0) {};
    \node [label={\small $a_3$}] at (-0.5,0.5) {};
    \node [label={\small $a_4$}] at (-0.5,1) {};
    \node [label={\small $b_1$}] at (2.03,-0.5) {};
    \node [label={\small $b_2$}] at (2,0) {};
    \node [label={\small $b_3$}] at (2,0.5) {};
    \node [label={\small $b_4$}] at (2,1) {};     
\end{tikzpicture}
    \caption{Some canonical cases in \cref{thm:main-1}(b)}
    \label{fig:1}
\end{figure}

\begin{proof}[Proof of \cref{thm:main-1}(c)]
    Suppose $G=(V,E)$ is a tight $(2,0)$-stable graph on $n$ vertices, with $n$ odd. Then $\alpha(G)=\floor{\frac{n-1}{2}}=\frac{n-1}{2}=\floor{\frac{n}{2}}$. In particular, $G$ is tight (1,0)-stable. By \cref{thm:main-1}(b), $G$ has a spanning subgraph $H$ that is a vertex-disjoint union of one odd cycle and a (possibly empty) matching. 
    
    We first verify that $H=C_n$. By contradiction, suppose $H$ is a vertex-disjoint union of a nonempty matching $e_1,\dots,e_s$ and an odd cycle $C_{n-2s}$, with $s\geq 1$. Then every independent set of size $\frac{n-1}{2}$ in $G$ must include one vertex from each of $e_1,\dots,e_s$. Thus, removing the two vertices in $e_1$ will destroy all maximum independent sets in $G$, contradicting the fact that $G$ is (2,0)-stable. Hence $H=C_n$. If $G\neq H$, i.e., $G$ is an odd cycle plus some extra chords, then we can find another spanning subgraph $H'$ of $G$ that is a vertex-disjoint union of a smaller odd cycle and a nonempty matching, which by the argument above implies that $G$ is not (2,0)-stable. Thus, we must have $G=H=C_n$.
\end{proof}

\section{Proof of  \cref{thm:main-1}(d) and \cref{thm:main-2}}

In this section, we prove \cref{thm:main-1}(d) and \cref{thm:main-2}. As mentioned earlier, we first introduce the concept of $\alpha$-critical graphs.

\begin{definition}
    For graph $G=(V,E)$ and edge $uv\in E$, we let $G-uv$ denote the graph whose vertex set is $V$ and edge set is $E\setminus\{uv\}$.
\end{definition}
\begin{definition}
    We say that $G=(V,E)$ is $\alpha$-critical if for every edge $uv\in E$, we have $\alpha(G-uv)>\alpha(G)$.
\end{definition}


We will utilize the following results, by Andr\'asfai \cite{And67} and Sur\'anyi \cite{Sur75}, on connected $\alpha$-critical graphs with independence number close to $n/2$. A survey on these results can be found at  \cite[Chapter~18]{Lov19}. A proof of \cref{thm:deficiency-3} can also be found at \cite{Zhu86}.

\begin{theorem}[Andr\'asfai]\label{thm:deficiency-2}
Let $G=(V,E)$ be a connected $\alpha$-critical graph with $|V|=2\alpha(G)+2$. Then $G$ is an even subdivision of $K_4$.
\end{theorem}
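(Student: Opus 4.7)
The plan is to leverage the structure theory of $\alpha$-critical graphs. Write $n := |V|$ and $\alpha := \alpha(G)$, so the hypothesis reads $n = 2\alpha + 2$. The key reformulation of $\alpha$-criticality used throughout is: for every edge $uv \in E$ there is an independent set $I_{uv}$ of size $\alpha + 1$ with $u, v \in I_{uv}$ such that $uv$ is the unique edge of $G$ both of whose endpoints lie in $I_{uv}$.

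The first order of business is to establish basic structural properties. To show $\delta(G) \ge 2$: if $v$ were a pendant vertex with neighbor $u$ and $|V| \ge 3$, then for any other neighbor $u'$ of $u$, the set $I_{uu'}$ cannot contain $v$ (else the edge $uv$ would lie inside $I_{uu'}$), and hence $(I_{uu'} \setminus \{u\}) \cup \{v\}$ would be independent in $G$ of size $\alpha + 1$, a contradiction. A similar exchange argument at cut vertices gives $2$-connectedness. The main technical work is the degree bound $\Delta(G) \le 3$. If $v$ had four neighbors $u_1, u_2, u_3, u_4$, each critical set $I_{vu_i}$ contains $v$ and $u_i$ but not $u_j$ for $j \ne i$ (else the edge $vu_j$ would sit inside $I_{vu_i}$); so the residues $A_i := I_{vu_i} \setminus \{v, u_i\}$ of size $\alpha - 1$ all lie in the set $V \setminus N[v]$, of size at most $2\alpha - 3$. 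A careful exchange argument---combining two of these sets and swapping roles between neighbors of $v$ along a suitable symmetric difference---then produces an independent set of size $\alpha + 1$ in $G$ itself, contradicting maximality of $\alpha$.

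With $\delta(G) \ge 2$ and $\Delta(G) \le 3$ in hand, the degree-$2$ vertices partition $G$ into internally-disjoint paths joining the degree-$3$ branch vertices. Applying $\alpha$-criticality to interior edges shows each such path has odd edge-length, since a path of even length would contain a non-critical edge. Matching $n = 2\alpha + 2$ against the resulting topological structure then identifies the branch multigraph as a cubic simple graph on exactly four vertices---namely $K_4$. Larger candidate cubic topological structures, such as $K_{3,3}$ or prism-like configurations, are ruled out either by directly computing $\alpha$ of their even subdivisions and showing the deficiency does not equal $2$, or by exhibiting a non-critical edge. I expect the main obstacle to be the $\Delta(G) \le 3$ step: the simultaneous manipulation of multiple critical independent sets at a single high-degree vertex is the technical heart of the argument, and a parallel, though easier, parity analysis is required to conclude that each branch-to-branch path has odd length.
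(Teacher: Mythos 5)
This statement is Andr\'asfai's classical theorem, which the paper does not prove: it is quoted from \cite{And67} (see also \cite[Chapter~18]{Lov19}), so there is no in-paper argument to compare against. Your roadmap is the standard one for this result --- minimum degree at least $2$, two-connectedness, the degree bound $\Delta(G)\le n-2\alpha(G)+1=3$ (Hajnal's lemma), suppression of the degree-$2$ paths into a cubic core, and parity of the path lengths --- and the intermediate claims you state are all true. The pendant-vertex exchange is correct as written.

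However, the two steps that carry essentially all the difficulty are left as black boxes, and as written the proposal has genuine gaps there. First, for $\Delta(G)\le 3$ you correctly set up the sets $A_i=I_{vu_i}\setminus\{v,u_i\}$ of size $\alpha-1$ inside $V\setminus N[v]$, but the counting you give (four sets of size $\alpha-1$ in a universe of size at most $2\alpha-3$) only forces pairwise intersections, which is not by itself a contradiction; the actual exchange argument in Hajnal's lemma is a nontrivial induction/uncrossing step that you would need to supply. Second, and more seriously, the final identification of the cubic core is not a proof: after suppressing degree-$2$ paths you must show that \emph{every} cubic $\alpha$-critical graph of deficiency $2$ is $K_4$, and ``ruling out $K_{3,3}$ or prism-like configurations'' only disposes of two of infinitely many cubic candidates. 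One needs a general argument here (e.g.\ a bound on the number of vertices of minimum degree $3$ in an $\alpha$-critical graph of given deficiency, or a direct argument exhibiting a non-critical edge in any cubic graph on at least $6$ vertices with $\alpha=n/2-1$). You should also justify that suppressing a degree-$2$ vertex preserves $\alpha$-criticality and the deficiency $n-2\alpha$, which is the lemma that makes the reduction to the cubic core legitimate. As it stands the proposal is a correct outline of the known proof rather than a proof.
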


\begin{theorem}[Sur\'anyi]\label{thm:deficiency-3}
Let $G=(V,E)$ be a connected $\alpha$-critical graph with $|V|=2\alpha(G)+3$ and minimum degree at least 3. Then $G$ must be one of $K_5$, $H_7$, $H_9$ or $T_9$ as in \cref{thm:main-2}.
\end{theorem}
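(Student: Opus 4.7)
The plan is to classify all connected $\alpha$-critical graphs $G$ of defect $|V(G)| - 2\alpha(G) = 3$ with minimum degree at least $3$, using the classical structure theory of $\alpha$-critical graphs. Throughout, write $\alpha := \alpha(G)$, $n := |V(G)| = 2\alpha + 3$, and $\tau := n - \alpha = \alpha + 3$.

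I would begin by collecting standard tools. By the Hajnal--Lov\'asz degree bound for $\alpha$-critical graphs, every vertex has degree at most $\tau$, so together with the min-degree hypothesis one has $3 \leq \deg(v) \leq \alpha + 3$ for all $v$; and Lov\'asz's edge bound gives $|E(G)| \leq \binom{\tau+1}{2} = \binom{\alpha+4}{2}$. Moreover, $\alpha$-criticality yields that for every edge $uv \in E$ there exists an independent set $I_{uv}$ of size $\alpha + 1$ in $G - uv$ containing both $u$ and $v$; then $I_{uv} \setminus \{u\}$ and $I_{uv} \setminus \{v\}$ are maximum independent sets of $G$.

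The main technical step is to bound $\alpha \leq 3$ (equivalently $n \leq 9$). I would fix a maximum independent set $A$, so that $|A| = \alpha$ and $|V \setminus A| = \alpha + 3$, and combine the degree/edge bounds with the min-degree hypothesis and the ``swap'' structure provided by the sets $I_{uv}$. Every edge $e$ inside $V \setminus A$ provides a dedicated $I_e$ that uses only $\alpha - 1$ vertices of $A$; double-counting these swaps against the available vertices of $A$ under the min-degree constraint should force $\alpha \leq 3$.

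Finally, I would classify the graphs in each remaining case $\alpha \in \{1,2,3\}$. For $\alpha = 1$ ($n = 5$) only $K_5$ qualifies; for $\alpha = 2$ ($n = 7$) a finite check among $\alpha$-critical $7$-vertex graphs with $\delta \geq 3$ isolates $H_7$; for $\alpha = 3$ ($n = 9$) a more delicate case analysis based on the local neighborhoods of a maximum independent set yields $H_9$ and $T_9$. The hardest part will be establishing $\alpha \leq 3$: the global degree and edge bounds do not force this on their own, so the argument requires finer structural information about how the sets $I_{uv}$ interact with $A$ under the min-degree hypothesis. The $n = 9$ case analysis is also delicate, but becomes tractable once the bound $n \leq 9$ is in hand, as the candidate adjacency patterns to rule out are then finite.
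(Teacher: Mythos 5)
There is a fundamental mismatch here: the paper does not prove this statement at all. It is Sur\'anyi's classification theorem, imported as a black box with citations to \cite{Sur75}, to the survey in \cite[Chapter~18]{Lov19}, and to \cite{Zhu86} for an alternative proof. So you are attempting from scratch a deep result whose original proof runs to dozens of pages, and your proposal does not close the gap. The pivotal step --- showing $n\leq 9$, i.e.\ $\alpha\leq 3$ --- is exactly the hard content of the theorem, and you explicitly leave it unproved: you say the degree and edge bounds ``do not force this on their own'' and that the double-counting of the swap sets $I_{uv}$ against $A$ ``should force $\alpha\leq 3$,'' but no such count is exhibited, and nothing in the proposal indicates why it would succeed. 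As written this is a plan, not a proof. The subsequent case classifications for $n=7$ and $n=9$ are likewise only asserted (``a finite check,'' ``a more delicate case analysis''); the $n=9$ case in particular is where much of the difficulty of Sur\'anyi's argument lives, since one must isolate exactly $H_9$ and $T_9$ among all connected $\alpha$-critical $9$-vertex graphs with $\alpha=3$ and $\delta\geq 3$.

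A secondary issue: the degree bound you quote is not the right tool. The relevant sharp result (Hajnal) is that every vertex of an $\alpha$-critical graph without isolated vertices satisfies $\deg(v)\leq |V|-2\alpha(G)+1$, which for defect $3$ pins every degree into $\{3,4\}$ once the minimum-degree hypothesis is imposed; your bound $\deg(v)\leq\tau=\alpha+3$ grows with $\alpha$ and gives essentially no leverage. The standard route to $n\leq 9$ goes through Lov\'asz's finite-basis machinery for $\alpha$-critical graphs of fixed defect (bounding the number of vertices of degree at least $3$ in terms of the defect), not through an ad hoc double count over a single maximum independent set. If your goal is to match the paper, the correct move is simply to cite Sur\'anyi as the authors do; if you genuinely want a self-contained proof, you need to supply the $n\leq 9$ argument in full, and that is a substantial undertaking.
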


\begin{proof}[Proof of \cref{thm:main-1}(d)] 
    Suppose $G=(V,E)$ is a tight $(2,0)$-stable graph on $n$ vertices, with $n$ even. 
    By a greedy removal of edges, we can obtain a spanning subgraph $G'$ of $ G$, such that $\alpha(G')=\alpha(G)=n/2-1$ and  $G'$ is $\alpha$-critical. Since $G$ is (2,0)-stable, so is the subgraph $G'$.

    Let $G_1,\dots, G_t$ be the connected components of $G'$. For every $i\in[t]$, since $G_i$ is (2,0)-stable, we have $\alpha(G_i)\leq\floor{\frac{|V(G_i)|-1}{2}}\leq \frac{|V(G_i)|-1}{2}$. This gives
    \begin{align*}
        \frac{n}{2}-1=\alpha(G')&=\alpha(G_1)+\dots+\alpha(G_t)\leq \frac{1}{2}(|V(G_1)|+\dots+|V(G_t)|-t)=\frac{n-t}{2},
    \end{align*}
    so $t\leq 2$.
    
    If $t=1$, then $G'$ is a connected $\alpha$-critical graph with $|V(G')|=2\alpha(G')+2$. By \cref{thm:deficiency-2}, $G'$ is an even subdivision of $K_4$.
    
    If $t=2$, then $G'$ is a vertex-disjoint union of $G_1$ and $G_2$, each of which has $\alpha(G_i)=\frac{|V(G_i)|-1}{2}$. This implies that both $G_1$ and $G_2$ are tight (2,0)-stable graphs with odd vertex number. By \cref{thm:main-1}(c), $G'$ is a vertex-disjoint union of two odd cycles.
\end{proof}

We now move on to prove \cref{thm:main-2}.

\begin{proof}[Proof of \cref{thm:main-2}]
     Suppose $G=(V,E)$ is a tight (3,0)-stable graph on $n$ vertices.
     
If $n$ is even, then $\alpha(G)=\floor{\frac{\alpha-2}{2}}=\frac{n}{2}-1$. Fix any vertex $v\in V$. Since $G$ is (3,0)-stable, $G\setminus\{v\}$  is (2,0)-stable. Moreover, since $\alpha(G\setminus\{v\})=\frac{n}{2}-1=\floor{\frac{(n-1)-1}{2}}$, we know that  $G\setminus\{v\}$ is tight (2,0)-stable. By \cref{thm:main-1}(c), $G\setminus\{v\}$ is an odd cycle. We therefore know that $G\setminus\{v\}$ is an odd cycle for every $v\in V$. The only graph that has this property is the 4-clique $K_4$.

    If $n$ is odd, then $\alpha(G)=\floor{\frac{n-2}{2}}=\frac{n-3}{2}$. Again, by a greedy removal of edges, we can obtain a spanning subgraph $G'$ of $ G$, such that $\alpha(G')=\alpha(G)=\frac{n-3}{2}$ and  $G'$ is $\alpha$-critical. Since $G$ is (3,0)-stable, so is the subgraph $G'$. Let $G_1,\dots, G_t$ be the connected components of $G'$. For every $i\in[t]$, since $G_i$ is (3,0)-stable, we have $\alpha(G_i)\leq\floor{\frac{|V(G_i)|-2}{2}}\leq \frac{|V(G_i)|-2}{2}$. This gives
    \begin{align*}
        \frac{n-3}{2}=\alpha(G')&=\alpha(G_1)+\dots+\alpha(G_t)\leq \frac{1}{2}(|V(G_1)|+\dots+|V(G_t)|-2t)=\frac{n-2t}{2},
    \end{align*}
    so $t= 1$. Therefore, $G'$ is a connected $\alpha$-critical graph with $|V(G')|=2\alpha(G')+3$.
    
    We further note that $G'$ has minimum degree at least 3. By contradiction, if $G'$ has a vertex $v$ of degree $\leq 2$, then by removing $v$ and its neighbors from $G'$, we are able to remove at most 3 vertices from $G'$ and reduce its independence number, which means that $G'$ is not (3,0)-stable. Hence $G'$ is a connected $\alpha$-critical graph with $|V(G')|=2\alpha(G')+3$ and minimum degree at least 3. By \cref{thm:deficiency-3}, $G'$ must be one of $K_5$, $H_7$, $H_9$ or $T_9$.
\end{proof}

\section{Further questions}

In this work, we investigated the structure of $n$-vertex $(k,0)$-stable graphs with independence number $\alpha=\floor{\frac{n-k+1}{2}}$. Our results show that such graphs can be arbitrarily large for $k=1,2$ but are very sharply bounded in size when $k\geq 3$.
While the clique $K_{k+1}$ is tight $(k,0)$-stable, we do not know any other natural infinite family of graphs $G_k$ that is tight $(k,0)$-stable for each $k\geq 3$. Thus, we ask the following.

\begin{question}
    Does there exist a positive integer $k_0$ such that, for all $k\geq k_0$, the only tight $(k,0)$-stable graph is $K_{k+1}$?
\end{question}

Our proofs suggest that there might be some connections between  tight $(k,\ell)$-stable graphs -- whose independence number is resilient under vertex removal, and $\alpha$-critical graphs -- whose independence number is susceptible to edge removal.
It would be interesting to understand these connections further.

Recall that every $(k,0)$-stable graph on $n$ vertices has independence number at most $\lfloor \frac{n-k+1}{2}\rfloor$; Corollary~\ref{cor:main} implies that for a fixed $k$, this upper bound cannot be attained for sufficiently large $n$. In the opposite direction, Dong and Wu \cite{DW22} constructed a sequence of  $n$-vertex $(3,0)$-stable graphs with independence number $n/2 - O(\sqrt{n})$. This was extended by Alon \cite{Alo21} who showed that  for every $k > l\geq 0$, there exists a sequence of $n$-vertex $(k, \ell)$-stable graphs with independence number $n/2 - o( n)$. For $k=3$, we ask whether we can improve the $O(\sqrt{n})$ gap to a constant.

\begin{question}
    Does there exist $c>0$ such that there is a sequence of $(3,0)$-stable graphs $G$, with vertex number $n\to\infty$ and $\alpha(G)\geq |V(G)|/2-c$?
\end{question}

\bibliographystyle{plainurl}


\begin{thebibliography}{1}

\bibitem{Alo21}
Noga Alon.
\newblock Hitting all maximum independent sets.
\newblock 2021.
\newblock preprint, \url{http://arxiv.org/abs/2103.05998}.

\bibitem{And67}
B\'{e}la Andr\'{a}sfai.
\newblock On critical graphs.
\newblock In {\em Theory of {G}raphs ({I}nternat. {S}ympos., {R}ome, 1966)}, pages 9--19. Gordon \& Breach, New York, 1967.

\bibitem{DW22}
Zichao Dong and Zhuo Wu.
\newblock On the stability of the graph independence number.
\newblock {\em SIAM J. Discrete Math.}, 36(1):229--240, 2022.
\newblock \href {https://doi.org/10.1137/21M1405071} {\path{doi:10.1137/21M1405071}}.

\bibitem{Lov19}
L\'{a}szl\'{o} Lov\'{a}sz.
\newblock {\em Graphs and geometry}, volume~65 of {\em American Mathematical Society Colloquium Publications}.
\newblock American Mathematical Society, Providence, RI, 2019.
\newblock \href {https://doi.org/10.1090/coll/065} {\path{doi:10.1090/coll/065}}.

\bibitem{Sur75}
L\'{a}szl\'{o} Sur\'{a}nyi.
\newblock On line critical graphs.
\newblock In {\em Infinite and finite sets ({C}olloq., {K}eszthely, 1973; dedicated to {P}. {E}rd\H{o}s on his 60th birthday), {V}ols. {I}, {II}, {III}}, volume Vol. 10 of {\em Colloq. Math. Soc. J\'{a}nos Bolyai}, pages 1411--1444. North-Holland, Amsterdam-London, 1975.

\bibitem{Zhu86}
Qing~Chuan Zhu.
\newblock The structure of {$\alpha$}-critical graphs with {$|V(G)|-2\alpha(G)=3$}.
\newblock In {\em Graph theory and its applications: {E}ast and {W}est ({J}inan, 1986)}, volume 576 of {\em Ann. New York Acad. Sci.}, pages 716--722. New York Acad. Sci., New York, 1989.
\newblock \href {https://doi.org/10.1111/j.1749-6632.1989.tb16453.x} {\path{doi:10.1111/j.1749-6632.1989.tb16453.x}}.

\end{thebibliography}
\end{document}